 \newcommand{\grant}[1]{\medskip \baselineskip 10pt{\footnotesize #1} \medskip}
\newtheorem{theorem}{Theorem}[section]
\newtheorem{remark}{Remark}
\newtheorem{proposition}{Proposition}[section]
\numberwithin{equation}{section}
 \def\@evenhead{\vbox{\hbox to \textwidth{\thepage\hfil\sl\leftmark\strut}\hrule}}
 \def\@oddhead{\vbox{\hbox to \textwidth{\rightmark\hfill\thepage\strut}\hrule}}
\begin{document}
 \sloppy

\centerline{\bf ONE  PROPERTY OF ZERO SET OF FUNCTION INVERTIBLE}
\centerline{\bf IN THE SENSE OF EHRENPREIS IN THE SCHWARTZ ALGEBRA}     

\vskip 0.3cm

\centerline{\bf  N.~Abuzyarova, A. Idrisova, K. Khasanova}        

\vskip 0.3cm

\vskip 0.7 cm

\noindent {\bf Key words:}  Schwartz algebra, entire function, distribution of zero sets, slowly decreasing function.

\vskip 0.2cm

\noindent {\bf AMS Mathematics Subject Classification:} 30D15, 30E5, 42A38, 46F05

\vskip 0.2cm

\noindent {\bf Abstract.}
 
We consider those elements of the Schwartz algebra of entire functions which are Fourier-Laplace transforms
 of invertible distributions with compact supports on the real line. These functions are called invertible 
in the sense of Ehrenpreis.
The presented result concerns with the properties of zero subsets of invertible in the sense of Ehrenpreis function $f$.
Namely, we establish some properties
of the zero subset formed by  zeros of $f$ laying  not far from the real axis.

\section{\large Introduction}
 
Let $\mathcal E'$ denote the strong dual to the Fr\'echet  space $\mathcal E:=C^{\infty} (\mathbb R)$. 
Recall that Fourier-Laplace transform operator acting in $\mathcal E$ is defined  by the formula 
 		$$\psi (z)=S(e^{-\mathrm{i}tz}),\quad S\in\mathcal E',$$
  and the image $\mathcal P$ of $\mathcal E'$ under this transform
becomes
the topological algebra ({\sl Schwartz algebra}) if we  equip it   
 with the topology and the algebraical structure induced from $\mathcal E'$.
It is well-known that $\mathcal P$ consists of all entire functions 
of exponential type  having at most polynomial growth along the real axis  \cite[Theorem 7.3.1]{Horm}.

  {\sl The division theorem} is valid for $\varphi\in\mathcal P$ if the following implication holds:
 $$\Phi\in\mathcal P, \quad \Phi/\varphi\in Hol (\mathbb C) \Longrightarrow \Phi/\varphi\in\mathcal P.$$

Now, we explain why having this property for $\varphi\in\mathcal P$ is important for the applications.

In  \cite{Ehren} L. Ehrenpreis establishes that 
the validity of the division theorem for  $\varphi\in\mathcal P$
 is equivalent to 
  the invertibility in the spaces $\mathcal E$ 
and  $\mathcal D'=(C_0^{\infty} (\mathbb R))'$ of the  distribution $S=\mathcal F^{-1} (\varphi)$,
which means 
$$
S*\mathcal E=\mathcal E, 
$$
$$
S*\mathcal D' =\mathcal D',
$$
where the symbol $*$ denotes the convolution.

We say that $\varphi\in\mathcal P$ 
is
 {\sl invertible in the sense of Ehrenpreis} 
if the division theorem is valid for it
  (see \cite{NF-ZP}, \cite{NF-LJM}).
	
Below, we will use  the following analytical criterion due to L. Ehrenpreis \cite[Theorems I, 2.2, Proposition 2.7]{Ehren}:
  $\varphi \in\mathcal P$ is invertible (in the sense of Ehrenpreis) if and only if it  
is {\sl slowly decreasing}, i.e.
 there exists  $ a>0$ such that
\begin{equation}\label{treb}
\forall x\in\mathbb R\ \exists x'\in\mathbb R: \ |x-x'|\le a\mathrm{ln}\, (2+|x|), \
|\varphi (x')|\ge (a+|x'|)^{-a} .
\end{equation}

Each $S\in\mathcal E'$ generates  the convolution operator $M_S$ acting in $\mathcal E:$
$$
M_S(f)=S*f,\quad f\in\mathcal E.
$$
It is easy to check that
 $\varphi\in\mathcal P$ is invertible in the sense of Ehrenpreis  if and only if 
the convolution operator $M_S$ generated by 
$S=\mathcal F^{-1}(\varphi)$ 
is surjective.

Let $\varphi\in\mathcal P$ be invertible in the sense of Ehrenpreis, $\Lambda$ be its zero set,
$\Lambda'\subset\Lambda$.
Then,  $(\mathrm{i}\Lambda')$  coincides with the spectrum
 of the differentiation-invariant subspace $W\subset\mathcal E$ 
which has the following property: each $f\in W$ is represented as a series with grouping
of exponential monomials contained in $W$.
This fact is established in 
\cite{Ehren}, \cite{Ber-Tayl}, \cite{Ber-Str}
for the subspaces of the form
$$
W=\{f\in\mathcal E:\quad S*f=0\},
$$
where $S\in\mathcal E'$ is fixed.
And it is also true for general  subspaces admitting (weak) spectral synthesis with respect to the differentiation operator.

Summarizing the above, we may conclude that there are enough reasons to study the behavior and zero subsets
of functions $\varphi\in\mathcal P$ which are invertible in the
sense of Ehrenpreis.

We establish some  restrictions on the distribution of  real parts of  zeros
lying not far from the real axis (Theorem \ref{tm-1}).
The result  generalizes \cite[Lemma 2]{NF-LJM} and  \cite[Proposition 6.1]{Ehren}).


\section{\large Zero sets}

Let
 $\mathcal M=\{\mu_j\},$ $\mu_j=\alpha_j+\mathrm{i}\beta_j,$
$$ 
0<|\mu_1|\le |\mu_2|\le\dots,
$$
be such that
 $\beta_j=O(\mathrm{ln}\,|\mu_j|)$ as $j\to\infty,$
and the formula
\begin{equation}
\psi(z)=\lim_{R\to\infty} \prod_{|\mu_j|\le R}\left( 1-\frac{z}{\mu_j}\right)
\label{fi-lam}
\end{equation}
defines  entire function of exponential type.

In \cite[Lemma 1]{NF-LJM},
we established the following fact.

\smallskip

\noindent
{\bf Lemma A.}
{\it 
The function
$\psi$ defined by (\ref{fi-lam}) is invertible in the sense of Ehrempreis element of the Schwartz  algebra
 $\mathcal P$
if and only if the same is true about the function 
\begin{equation}\label{psi-1}
\psi_1(z)=\lim_{R\to\infty} \prod_{|\alpha_j|\le R}\left( 1-\frac{z}{\alpha_j}\right) .
\end{equation}
}

\smallskip

Taking into account Lemma A,  we give  another formulation of  \cite[Lemma 2]{NF-LJM}.
 \smallskip

\noindent
{\bf Lemma B} 
{\it Let $\psi\in\mathcal P$ be  invertible in the sense of Ehrenpreis,  $\mathcal M=\{ \mu_k\}\subset\mathbb C$
be its zero set.

Then, for  the subset $\mathcal M'\subset\mathcal M$  defined
by 
$$
\mu_k\in\mathcal M' \Longleftrightarrow |\mathrm{Im}\, \mu_k|\le M_0\mathrm{ln}\,|\mathrm{Re}\, \mu_k|
$$
with  $M_0>0$ fixed,
$$
\varlimsup_{|x|\to\infty} \frac{m_{\mathrm{Re}}(x,1)}{\mathrm{ln}\, |x|} <\infty ,
$$
where $m_{\mathrm{Re}}(x,1)$ denotes the number of points of the sequence
$$\mathrm{Re}\, \mathcal M'=\{\mathrm{Re}\, \mu_k:\ \mu_k\in\mathcal M'\} $$
 contained in the segment $[x-1;x+1].$ }

\smallskip

Consider non-decreasing function
 $l:[0;+\infty)\to [1;+\infty)$ satisfying 
 \begin{equation}\label{cond-1}
\mathrm{ln}\, t=O(l(t)),\quad t\to\infty,
\end{equation}
\begin{equation}\label{cond-2}
\varlimsup_{t\to+\infty} \frac{\mathrm{ln}\, l(t)}{\mathrm{ln}\, t}<\frac{1}{2} ,
\end{equation}
and
\begin{equation}\label{cond-3}
\varlimsup_{t\to+\infty} \frac{l(Kt)}{l(t)}<+\infty 
\end{equation}
for some  $K>1$.
 
The following theorem  generalizes   Lemma B.

\begin{theorem} \label{tm-1}
 Let
$\psi\in\mathcal P$  be invertible in the sense of Ehrenpreis with the zero set $\mathcal M=\{ \mu_k\}$,
and  $\mathcal M'\subset\mathcal M$ be defined
by 
$$
\mu_k\in\mathcal M' \Longleftrightarrow |\mathrm{Im}\, \mu_k|\le M_0 \cdot l\left(|\mathrm{Re}\, \mu_k|\right) 
$$
for a fixed  $M_0>0.$

Then, 
\begin{equation}\label{star-001}
\varlimsup_{|x|\to\infty} \frac{m_{\mathrm{Re}}(x,1)}{l ( |x|)} <\infty ,
\end{equation}
where $m_{\mathrm{Re}}(x,1)$ denotes the number of points of the sequence
$$
\mathrm{Re}\, \mathcal M'=\{\mathrm{Re}\, \mu_k:\ \mu_k\in\mathcal M'\} 
$$
 contained in the segment $[x-1;x+1].$ 
\end{theorem}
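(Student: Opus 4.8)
The plan is to reduce Theorem~\ref{tm-1} to the already-known Lemma~B by exploiting the growth conditions \eqref{cond-1}--\eqref{cond-3} on $l$. The essential point is that Lemma~B handles the ``logarithmic band'' $|\operatorname{Im}\mu_k|\le M_0\ln|\operatorname{Re}\mu_k|$, whereas here we allow the wider band governed by $l$; but \eqref{cond-2} forces $l(t)=O(t^{1/2-\varepsilon})$, which is still thin compared with the exponential type of $\psi$, so the slowly decreasing property should survive a change of variables that straightens this band.

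First I would record the elementary consequences of \eqref{cond-1}--\eqref{cond-3}: by \eqref{cond-1} the new band contains the old logarithmic one, and by \eqref{cond-3} $l$ is (up to a constant) ``doubling'', so summing $l$ over dyadic blocks behaves well; by \eqref{cond-2}, $\sum 1/(|\operatorname{Re}\mu_k|\,l(|\operatorname{Re}\mu_k|))$-type sums and the exponent counting stay subquadratic, which is what lets one build an auxiliary entire function out of $\operatorname{Re}\mathcal M'$ of finite exponential type. The next step is the analogue of Lemma~A's strategy: pass from $\psi$ and its zeros $\mu_k$ to the function with zeros $\operatorname{Re}\mu_k$ (those in $\mathcal M'$), showing that invertibility in the sense of Ehrenpreis is inherited. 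For the zeros inside a logarithmic band this is exactly Lemma~A; for the wider $l$-band one needs that replacing $\mu_k$ by $\operatorname{Re}\mu_k$ changes $|\psi(x)|$ on the real axis by at most a controlled factor, uniformly, so that \eqref{treb} is preserved with a possibly larger constant $a$. Here condition \eqref{cond-2} is used to guarantee that the perturbation $\sum_{\mu_k\in\mathcal M'}\log\bigl|(x-\operatorname{Re}\mu_k)/(x-\mu_k)\bigr|$ is $O(\ln(2+|x|))$ on a suitable comparison set of $x$'s, which is precisely the slack allowed in the slowly decreasing estimate.

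Once we know that the function $\psi_1$ with real zero set $\operatorname{Re}\mathcal M'$ (completed, if necessary, with the remaining real parts so as to stay in $\mathcal P$) is again invertible in the sense of Ehrenpreis, I would apply \eqref{treb} to $\psi_1$ directly to bound the counting function. Fix $x$ with $|x|$ large; choose $x'$ with $|x-x'|\le a\ln(2+|x|)$ and $|\psi_1(x')|\ge(a+|x'|)^{-a}$. On the other hand, if $\operatorname{Re}\mathcal M'$ had $N$ points in $[x-1,x+1]$, each contributes a factor $\le |x'-\operatorname{Re}\mu_k|/|\mu_k|\le (1+a\ln(2+|x|))/|\mu_k|$ to $|\psi_1(x')|$, while the Jensen-type upper bound for $|\psi_1|$ on real axis gives $|\psi_1(x')|\le C(1+|x'|)^{C}\,e^{o(\,\cdot\,)}$. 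Comparing the lower bound $(a+|x'|)^{-a}$ with the product of these small factors yields $N\cdot\ln(|x|/(1+\ln|x|)) \le C\ln|x| + (\text{terms controlled by }l(|x|))$, hence $N\le C\,l(|x|)$ after using \eqref{cond-1} and \eqref{cond-3} to absorb the lower-order contributions; this is \eqref{star-001}.

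The main obstacle I expect is the second step: making rigorous that passing from the band-zeros $\mu_k\in\mathcal M'$ to their real parts preserves the slowly decreasing estimate \eqref{treb} uniformly. One has to control the logarithmic potential of the symmetric difference of the two zero sets along the real axis, and the danger is clustering of $\operatorname{Im}\mu_k$ near the upper edge $M_0 l(|\operatorname{Re}\mu_k|)$, where the ratio $|x-\operatorname{Re}\mu_k|/|x-\mu_k|$ can be as small as $\sim |x-\operatorname{Re}\mu_k|/l(|x|)$. The remedy is to work not with a single $x$ but with the freedom in \eqref{treb} to move to a nearby $x'$: on a set of $x'$ of positive relative density in each interval of length $\sim\ln(2+|x|)$ the potential is $O(\ln(2+|x|))$ by a standard Cartan/Levin-type estimate, and condition \eqref{cond-2} guarantees that this exceptional-set argument does not consume the whole window $|x-x'|\le a\ln(2+|x|)$. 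Handling this balance carefully — and checking that the doubling condition \eqref{cond-3} is exactly what lets the constants be chosen uniformly in $|x|$ — is the technical heart of the proof; the rest is bookkeeping with Jensen's inequality and the definition of $m_{\operatorname{Re}}(x,1)$.
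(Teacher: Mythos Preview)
Your plan has a genuine gap at the point you yourself flag as the main obstacle: you want to show that the projected function $\psi_1$ (zeros in $\mathcal M'$ replaced by their real parts) is again invertible in the sense of Ehrenpreis, i.e.\ satisfies \eqref{treb} with a logarithmic window. But the perturbation you must control is
\[
\sum_{\mu_k\in\mathcal M'}\log\frac{|x-\mu_k|}{|x-\alpha_k|}
=\tfrac12\sum_{\mu_k\in\mathcal M'}\log\Bigl(1+\frac{\beta_k^2}{(x-\alpha_k)^2}\Bigr),
\qquad \alpha_k=\mathrm{Re}\,\mu_k,\ \beta_k=\mathrm{Im}\,\mu_k,
\]
and the terms with $|x-\alpha_k|\lesssim l(|x|)$ contribute roughly $N\cdot\log l(|x|)$, where $N$ is the number of such $\alpha_k$. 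You have no a~priori bound on $N$; controlling it is precisely the content of~\eqref{star-001}. Even granting $N=O(l(|x|))$, the perturbation would be of order $l(|x|)\log l(|x|)$, not $O(\ln|x|)$, so \eqref{treb} for $\psi_1$ cannot be recovered from \eqref{treb} for $\psi$ in this way. The Cartan/Levin exceptional-set argument you invoke does not help: it produces exceptional sets whose measure scales with the perturbation scale $l(|x|)$, while the window $|x-x'|\le a\ln(2+|x|)$ allowed by \eqref{treb} is strictly smaller whenever $l$ genuinely dominates $\ln$. Note also that if your reduction worked, Lemma~B would give $m_{\mathrm{Re}}(x,1)=O(\ln|x|)$, which is stronger than what the theorem asserts; the appearance of $l$ on the right of~\eqref{star-001} is a signal that the standard slowly-decreasing estimate is \emph{not} inherited by $\psi_1$.

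The paper proceeds differently. It does not claim $\psi_1$ is Ehrenpreis-invertible; instead it proves the $l$-scale analogue of \eqref{treb}: for each real $x$ there is $x'$ with $|x'-x|\le M_1 l(|x|)$ and $\ln|\psi_1(x')|\ge -M_1 l(|x'|)$. The key device is to compare $\psi$, an intermediate $\psi^+$, and $\psi_1$ not on the real axis but at points $z=x\pm 2\mathrm{i}C_0M_0 l(|x|)$, where one has the \emph{pointwise} inequality $|1-z/\mu_j|\le |1-z/\alpha_j|$ for the relevant $\mu_j$; this sidesteps the uncontrolled sum above. The minimum modulus theorem, applied in discs of radius $\sim l(|x|)$, transfers the lower bound from $\psi$ to $\psi^+$ to $\psi_1$ at the $l$-scale. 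The counting step is then carried out not by direct factor comparison but by a Bernstein--Taylor argument: if $m_{\mathrm{Re}}(x_j,1)/l(|x_j|)\to\infty$, one replaces the nearby zeros by a single zero of multiplicity $m_j$ at $x_j$, bounds derivatives via Bernstein's inequality, and uses the Taylor expansion together with Stirling to force $|\psi_1|$ below $e^{-n\,l(|x_j|)}$ on an interval of length $\gtrsim n\,l(|x_j|)$, contradicting the $l$-scale lower bound. Both the $l$-scale window and the $l$-scale lower bound are essential here; your $\ln$-scale versions would not suffice and, as argued, are not available.
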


First, we prove the following auxiliary proposition.

\begin{proposition} \label{pr-1}
Let $\psi,$  $\mathcal M,$ 
$\mathcal M'$ 
be the same  as in Theorem
   \ref{tm-1},
$$
\mathcal M''=\mathcal M\setminus\mathcal M',\quad
\alpha_j=\mathrm{Re}\, \mu_j .
$$

Then, the function
\begin{equation}\label{psi-1-1}
\psi_1(z)=\lim_{R\to\infty}
\left( \prod_{\substack{|\mu_j|\le R\\ \mu_j\in\mathcal M'}}\left( 1-\frac{z}{\alpha_j}\right)
\prod_{\substack{|\mu_j|\le R\\ \mu_j\in\mathcal M''}}\left( 1-\frac{z}{\mu_j}\right) \right) 
\end{equation}
belongs to the algebra $\mathcal P$, 
and there exists  $M_1>0$ such that 
 \begin{equation}\label{10-star}
 \forall x\in\mathbb R, |x|>2,\ \exists z'\in\mathbb C:\ |z'-x|\le M_1 l(|x|)\quad \text{and} 
\quad 
\mathrm{ln}\, |\psi_1 (z')|\ge -M_1 l(|z'|).
\end{equation}
\end{proposition}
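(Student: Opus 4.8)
The plan is to first establish that $\psi_1\in\mathcal P$, and then to transfer the slow-decrease property of $\psi$ (which holds by the Ehrenpreis criterion \eqref{treb}) onto $\psi_1$ by controlling the ratio $\psi_1/\psi$ in suitable regions of the plane. For the membership claim, I would argue as follows. Writing $\psi = P \cdot Q$, where $P$ is the partial product over $\mathcal M'$ and $Q$ over $\mathcal M''$, the function $\psi_1$ is obtained from $\psi$ by replacing each factor $(1-z/\mu_j)$ with $\mu_j\in\mathcal M'$ by $(1-z/\alpha_j)$. Because $\mu_j\in\mathcal M'$ means $|\beta_j|\le M_0 l(|\alpha_j|)$, and $l$ satisfies \eqref{cond-1}--\eqref{cond-2}, the imaginary parts $\beta_j$ are $o(|\alpha_j|)$, in fact $O(|\alpha_j|^{1/2-\varepsilon})$; this is exactly the kind of perturbation Lemma A (and its proof in \cite{NF-LJM}) is designed to handle, so one concludes that $P_{\mathrm{Re}}(z)=\lim_R\prod_{|\mu_j|\le R,\,\mu_j\in\mathcal M'}(1-z/\alpha_j)$ defines an entire function of exponential type whose growth on $\mathbb R$ differs from that of $P$ by at most a polynomial factor. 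Hence $\psi_1 = P_{\mathrm{Re}}\cdot Q$ lies in $\mathcal P$.

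Next, for the slow-decrease estimate \eqref{10-star}, fix $x\in\mathbb R$ with $|x|>2$. By \eqref{treb} applied to $\psi$ there is $x'\in\mathbb R$ with $|x-x'|\le a\ln(2+|x|)$ and $|\psi(x')|\ge (a+|x'|)^{-a}$; note $a\ln(2+|x|)\le C l(|x|)$ by \eqref{cond-1}. So it suffices to produce a point $z'$ near $x'$ at which $\ln|\psi_1(z')|\ge -M_1 l(|z'|)$. The idea is that $|\psi_1|$ and $|\psi|$ differ only through the finitely-many "nearby" factors indexed by $\mathcal M'$ whose real part $\alpha_j$ lies within a window of length $O(l(|x|))$ around $x$; away from these factors the two products have comparable moduli (with an at-most-polynomial discrepancy, absorbed into $M_1 l(|z'|)$ via \eqref{cond-1}). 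For each such nearby index the ratio $(1-z/\alpha_j)/(1-z/\mu_j)$ is close to $1$ unless $z$ is close to $\alpha_j$ or to $\mu_j$; I would move from $x'$ vertically (or make a small horizontal adjustment) by an amount $\le M_1 l(|x|)$ to reach a point $z'$ that stays at distance $\gtrsim 1$ from every $\alpha_j$ and every $\mu_j$ with $\mu_j\in\mathcal M'$ and $|\alpha_j - x|\le M_1 l(|x|)$. On such a $z'$, each of the finitely many nearby ratios is bounded below by a fixed positive constant, and the number of these ratios is at most $O(l(|x|))$ — this last fact is itself essentially the content of the proposition's conclusion, but at this stage we only need the crude bound coming from exponential type, namely that the zero set of $\psi$ has at most $O(R)$ points in $|z|\le R$, hence at most $O(l(|x|)\cdot\text{(vertical width)})$ in the relevant window; combined with $\beta_j$ being small this gives the requisite polynomial-in-$|x|$, i.e. $O(l(|x|))$-type, bound on $\ln$ of the product of the ratios. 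Putting the pieces together, $\ln|\psi_1(z')| \ge \ln|\psi(z'')| - M_1 l(|z'|)$ for a comparison point $z''$ near $x'$ on the real axis, and standard estimates (the minimum-modulus / Cartan-type lemma, or just the slow-decrease of $\psi$ propagated off the axis using the polynomial growth bound and exponential type) finish the bound.

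The main obstacle I anticipate is the geometric/counting step: choosing the perturbed point $z'$ so that it simultaneously (i) stays within distance $M_1 l(|x|)$ of $x'$, (ii) keeps $\ln|\psi(z')|$ or $\ln|\psi(z'')|$ under control — which requires knowing that slow decrease of $\psi$ on $\mathbb R$ propagates to a neighborhood of width $O(l(|x|))$, itself a consequence of exponential type plus polynomial growth — and (iii) avoids a bounded neighborhood of each of the finitely many points $\alpha_j,\mu_j$ in the window. Step (iii) is a pigeonhole argument: a vertical segment of length comparable to the number of obstacles, with unit-radius forbidden discs, always contains an admissible point, and we must check that this length is indeed $O(l(|x|))$, which forces us to use the exponential-type density bound on $\mathcal M$ together with the defining inequality $|\beta_j|\le M_0 l(|\alpha_j|)$ for $\mathcal M'$. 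The estimates \eqref{cond-2}--\eqref{cond-3} enter to guarantee that quantities like $l(|z'|)$, $l(|x'|)$ and $l(|x|)$ are all comparable (since $|z'-x|=O(l(|x|))=o(|x|)$), so that the final inequality can be written uniformly in terms of $l(|z'|)$ as required.
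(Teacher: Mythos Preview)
Your membership argument for $\psi_1\in\mathcal P$ is fine and matches the paper's. The gap is in the lower-bound step, specifically in the claim that once $z'$ ``stays at distance $\gtrsim 1$ from every $\alpha_j$ and every $\mu_j$'' the nearby ratios $(1-z'/\alpha_j)/(1-z'/\mu_j)$ are bounded below by a fixed positive constant. That is false: for $\mu_j\in\mathcal M'$ one has $|\alpha_j-\mu_j|=|\beta_j|\le M_0\,l(|\alpha_j|)$, so even with $|z'-\alpha_j|\ge 1$ the ratio $|z'-\alpha_j|/|z'-\mu_j|$ can be as small as $1/(1+M_0\,l(|x|))$. With $O(l(|x|))$ nearby indices (your own count), the accumulated loss in $\ln|\psi_1/\psi|$ is $O\bigl(l(|x|)\ln l(|x|)\bigr)$, which exceeds the budget $M_1 l(|x|)$. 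Enlarging the avoidance radius to $\asymp l(|x|)$ fixes the individual ratios but then the pigeonhole on a segment of length $O(l(|x|))$ with $O(l(|x|))$ forbidden discs of radius $\asymp l(|x|)$ fails. So the counting/avoidance route, as stated, does not close.

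The paper sidesteps this entirely. It introduces an intermediate function $\psi^+$ in which only the zeros $\mu_j\in\mathcal M'$ with $\beta_j\ge 0$ are projected to $\alpha_j$. The key geometric observation is that on the horizontal line $\mathrm{Im}\,z=2C_0M_0\,l(|x|)$ (with $C_0$ from \eqref{cond-3}), projecting such a zero down to the real axis moves it \emph{away} from $z$, so $|1-z/\mu_j|\le|1-z/\alpha_j|$ for every nearby $\mu_j\in\mathcal M'_+$; far-away factors are handled by a convergent $\sum l^2(\alpha_j)/\alpha_j^2$ correction via \eqref{cond-2}. This yields a clean one-sided inequality $\ln|\psi(z)|\le C\ln|\psi^+(z)|+O(1)$ on that line, with no zero-counting and no pigeonhole. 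The slow-decrease of $\psi$ is then propagated to that line by the minimum modulus theorem (applied in a disc of radius $\asymp l(|x|)$ around a point where $|\psi|$ is already controlled), producing a $w_x$ with $|w_x-x|\le M_3 l(|x|)$ and $\ln|\psi^+(w_x)|\ge -M_3 l(|w_x|)$. A second pass, now on the line $\mathrm{Im}\,z=-2C_0M_0\,l(|x|)$, handles $\mathcal M'_-$ and passes from $\psi^+$ to $\psi_1$ by the same mechanism. The two-step sign-splitting is exactly what your ``move vertically'' hint would need to become in order to work; as written, your proposal is missing this idea.
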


\begin{proof}
We start with estimating the single multiplier  $\left( 1-\frac{x}{\alpha_j}\right),$ $x\in\mathbb R,$
where $\alpha_j=\mathrm{Re}\,\mu_j,$ $\mu_j\in\mathcal M' .$
It is easy to see that 
\begin{multline*}
\left| 1-\frac{x}{\alpha_j}\right| \le \left| 1-\frac{x}{\mu_j}\right| \left( 1+\frac{M_0^2 l^2(\alpha_j)}{\alpha_j^2}\right)^{1/2}\le\\
\le \left| 1-\frac{x}{\mu_j}\right| \left( 1+O\left(\frac{l^2(\alpha_j)}{\alpha_j^2}\right)\right).
\end{multline*}

 Taking into account (\ref{cond-2}), we get
\begin{equation}\label{psi-psi-1}
\mathrm{ln}\, |\psi_1 (x)| \le \mathrm{const}\, \mathrm{ln}\, |\psi (x)|,\quad x\in\mathbb R.
\end{equation}
Hence, $\psi_1\in\mathcal P$.

To estimate  $|\psi_1|$ from below,
 we consider the auxiliary function 
$$
\psi^+ (z)=\lim_{R\to\infty} \left( \prod_{\substack{|\mu_j|\le R\\ \mu_j\in\mathcal M'_{-}\bigcup\mathcal M''}}
\left( 1-\frac{z}{\mu_j}\right)\prod_{\substack{|\mu_j|\le R\\ \mu_j\in\mathcal M'_{+}}}
\left( 1-\frac{z}{\alpha_j}\right)\right),
$$
where
$$
\mathcal M'_{+}=\{\mu_j\in\mathcal M':\ \beta_j\ge 0\},\quad \mathcal M'_{-} =\mathcal M'\setminus\mathcal M'_{+}.
$$

Because of  (\ref{cond-3}), we have
\begin{equation}\label{cond-3-1}
l(Kt)\le C_0 l(t),\quad t\ge 0,
\end{equation}
for some $C_0>0.$
Notice that
$$
\left| 1-\frac{z}{\mu_j}\right| \le \frac{|\mu_j-z|}{|\alpha_j|}=
\frac{\left( (\alpha_j-x)^2 +(\beta_j -y)^2\right)^{1/2}}{|\alpha_j|},
$$
where
$\mu_j=\alpha_j+\mathrm{i}\beta_j\in\mathcal M'_{+} ,$ $z=x+\mathrm{i} y.$
Together with (\ref{cond-3-1}), it gives us the inequality
\begin{equation}\label{mu-2}
\left| 1-\frac{z}{\mu_j}\right| \le \left| 1-\frac{z}{\alpha_j}\right| 
\end{equation}
if  $z=x+2\mathrm{i} C_0M_0 l(|x|)$, $\mu_j\in\mathcal M'_{+}$ and $|\mathrm{Re}\, \mu_j|=|\alpha_j|\le 4K|x|.$

From the other hand, for $z=x+2\mathrm{i} C_0M_0 l(|x|)$,
$\mu_j=\alpha_j+\mathrm{i}\beta_j\in\mathcal M'_{+}$ 
we have
\begin{equation}\label{mu-3}
\left| 1-\frac{z}{\mu_j}\right| \le \left| 1-\frac{z}{\alpha_j}\right| \cdot \left( 1+\frac{C_1 l^{2}(\alpha_j)}{\alpha_j^2}\right)^{1/2} 
\end{equation}
if $|\alpha_j|> 4K|x|,$
where the constant $C_1>0$ depends only on $l$ and $M_0 .$

The relations (\ref{cond-2}), (\ref{mu-2}), (\ref{mu-3}) lead to the estimate
\begin{equation}\label{ots-1}
\mathrm{ln}\, |\psi (z)| \le \mathrm{const}\, \mathrm{ln}\, |\psi^+ (z)| +O(1) \quad\text{as}\quad |x|\to\infty 
\end{equation}
if $z=x+2\mathrm{i} C_0M_0 l(|x|)$.

By the similar way, we get that
\begin{equation}\label{ots-2}
\mathrm{ln}\, |\psi^+ (z)| \le \mathrm{const}\, \mathrm{ln}\, |\psi_1 (z)| +O(1),\quad \text{as}\quad |x|\to\infty ,
\end{equation}
where $z=x-2\mathrm{i} C_0M_0 l(|x|).$

Applying the analytical criterion(\ref{treb}) 
and the minimum modulus theorem \cite[Ch. 1, Sec. 8, Th. 11]{Lev-Distr},
we arrive to the estimate 
$$
 \mathrm{ln}\, |\psi (z)| \ge -C_2\mathrm{ln}\, |x|
 $$
for all 
$z:$ $|z-x|=C_2\mathrm{ln}\, |x|$,
$x\in\mathbb R,$ $|x|>2$ and some $C_2>0$.

Notice that $\psi\in\mathcal P$ implies the inequality
\begin{equation}\label{alg-P}
|\psi (z)|\le C_{\psi} (2+|z|)^{C_{\psi}}e^{C_{\psi}|\mathrm{Im}\, z |}
\end{equation}
with some $C_{\psi}>0.$

Fix $x\in\mathbb R,$ $|x|>2,$
set $z_x=x+\mathrm{i} C_2\mathrm{ln}\, |x|$ and then apply the minimum modulus theorem
to the function
$\frac{\psi}{\psi (z_x)}$
in the disc $|z-z_x|\le 4 C_0M_0 l(|x|) .$
Taking into account (\ref{cond-1}), (\ref{cond-3})  and (\ref{alg-P}),
we find  $M_2>0$ and $ \theta\in (2;4)$ such that
\begin{equation}\label{w-x}
\mathrm{ln}\, |\psi (z)| \ge -M_2 l(|x|)\quad \text{if} \quad  |z-z_x|=\theta C_0M_0 l(|x|) .
\end{equation}
Further, from (\ref{ots-1}), (\ref{w-x}) and (\ref{cond-2})--(\ref{cond-3}),
it follows that there exists   $M_3>0$ with the property:
$$\forall x\in\mathbb R, \ |x|>2,\quad  
 \exists w_x\in\mathbb C\quad \text{such that}
$$
 $$
|w_x-x|\le M_3 l(|x|)\quad\text{and}\quad
\mathrm{ln}\, |\psi^+ (w_x)|\ge -M_3 l (|w_x|).
$$

Now, we apply the above argument including the minimum modulus theorem
to the function $\frac{\psi^+}{\psi^+(w_x)}$ and the disc $|z-w_x|\le R,$ 
where  
$$
R=\max\{ 2M_3l(|x|), \ 4C_0M_0 l(|x|)\}.
$$
It gives us  the estimate  for $\mathrm{ln}\, |\psi^+ (z)|$  which is similar to (\ref{w-x}).
This estimate and (\ref{ots-2}), together with (\ref{cond-2})--(\ref{cond-3}), lead  us to
 the  assertion.
\end{proof}

\begin{remark}
{\rm It is not difficult to see
that applying the minimum modulus theorem
one more time, to the function $\frac{\psi_1}{\psi(z')},$
we obtain  the following version of  Proposition \ref{pr-1}:

\noindent
there exists $M_1>0$ such that
 $$
\exists M_1>0:\ \forall x\in\mathbb R, |x|>2,\ \exists x'\in\mathbb R:
$$
\begin{equation}\label{r-1}
 |x'-x|\le M_1 l(|x|)\quad \text{and} \quad 
\mathrm{ln}\, |\psi_1 (x')|\ge -M_1 l(|x'|).
\end{equation}
}
\end{remark}

\noindent
{\bf Proof of Theorem \ref{tm-1}.}
 
\noindent
{\sl Wlog}, we  assume that 
 $\psi$ 
is bounded on the real axis and its exponential type equals 1.
Because of (\ref{psi-psi-1}), we may also assume that the same is true for the function $\psi_1$
defined by  (\ref{psi-1-1}).
Let  $\lambda_k\in\mathcal M'$ and  $\alpha_k=\mathrm{Re}\, \lambda_k .$

Further in the proof,  we use  symbol  $\psi$ to denote the function  $\psi_1$ defined in (\ref{psi-1-1}).

If (\ref{star-001}) fails then 
 \begin{equation}\label{opp-1}
\lim_{j\to\infty} \frac{m_{\mathrm{Re}}(x_j,1)}{l ( |x_j|)} =\infty 
\end{equation}
 for some $x_j$,  $|x_j| \to \infty .$
For clarity, we assume that $x_j>0$.

Consider entire functions 
$$
\psi_j (z)=\psi(z)(z-x_j)^{m_j}\cdot\prod\limits_{k: |\alpha_k-x_j|\le 1}(z-\alpha_k)^{-1}, 
$$
where $m_j =m(x_j ,1),$ $j=1,2,\dots $
It is easy to check that the estimates   
$$
\sup\limits_{x\in\mathbb R} |\psi_j(x)|
\le C_0 2^{m_j},  \quad j=1,2,\dots,
$$
hold
with  $C_0=\max\{ \sup\limits_{t\in\mathbb R} |\psi (t)|,\, 1\}.$
By   
 Bernstein's theorem  \cite[Chapter 11]{Boas}, the  inequalities
\begin{equation}\label{est-deriv}
\sup\limits_{x\in\mathbb R} |\psi^{(n)}_j (x)|\le C_0 2^{m_j} 
\end{equation}
are also valid  for all $n, \, j\in\mathbb N.$

From the Taylor expansion of  $\psi_j$ at
   $x_j$ and the estimates (\ref{est-deriv}), it follows  that
$$
|\psi_j(z)|\le C_02^{m_j} (m_j!)^{-1} |z-x_j|^{m_j}e^{|z-x_j|}, \quad z\in\mathbb C.
$$
Hence, 
for all $x\in\mathbb R$ satisfying the condition
\begin{equation}\label{star}
\mathrm{ln}\, C_0 +|x-x_j|+m_j\mathrm{ln}\, |x-x_j|-\mathrm{ln}\, (m_j!)\le -n \, l( x_j )-m_j\mathrm{ln}\, 2 ,
\end{equation}
where $n\in\mathbb N,$
we have the inequality 
\begin{equation}\label{ca-0}
|\psi_j(x)|\le x_j^{-n}\cdot 2^{-m_j} .
\end{equation}
 
By Stirling's formula,  the relation (\ref{star}) will follow from the inequality
$$
|x-x_j|+m_j\mathrm{ln}\, |x-x_j|-m_j\mathrm{ln}\, m_j \le -n\, l ( x_j) -C_1 m_j,
$$
where $C_1$ is an absolute constant.

Because of  (\ref{opp-1}), for each $n\in\mathbb N$ we can find   $j_n$ such that
\begin{equation}\label{cross}
-n\, l( x_j)\ge -m_j , \quad j=j_n, j_n+1,\dots 
\end{equation}
Fix $b\in (0;e^{-C_1-2}) .$
The estimates 
(\ref{ca-0}) are valid for $j\ge j_n$ and  all $x\in\mathbb R$  such that
$|x-x_j|\le b m_j .$
This fact, inequalities  (\ref{cross}) and the relations
$$
|\psi (z)|\le 2^{m_j}|\psi_j(z) |,\quad z\in\mathbb C,\ \ j=1,2, \dots ,
$$
  imply 
$$
|\psi (x)|\le e^{-n l(x_j)}
$$
for $x\in\mathbb R$ satisfying $ |x-x_j|\le b n\, l( x_j),\ \ j\ge j_n,\ \ n\in\mathbb N.$
It means that  $\psi$ does not satisfy  (\ref{r-1}), i.e. 
the assumption (\ref{opp-1}) leads to the contradiction.

Q.E.D.

\section*{\large Acknowledgments} 

\grant{This work is supported 
by Ministry  of Science and  Higher education of Russian Federation (code of scientific theme FZWU-2020-0027).}


\def\bibname{\vspace*{-30mm}{\centerline{\normalsize References}}}


\begin{thebibliography}{20}
\thispagestyle{headings}
\footnotesize
 

\bibitem{NF-ZP}
N.F.~Abuzyarova,
{\it On shifts of the sequence of integers generating functions
 that are invertible in the sense of Ehrenpreis.} 
 J. Math. Sci. 251 (2020), 161--175. 

\bibitem{NF-LJM}
N.F.~Abuzyarova, 
{\it On conditions of invertibility in the sense of Ehrenpreis
in the Schwartz algebra.}
 Lobachevskii J. of Math. 42 (2021), no.~6. (In print).


\bibitem{Ber-Str}
C.A.~Berenstein, D.C.~Struppa, 
{\it Complex analysis and convolution equations.}
Several Complex Variables. V: Complex Analysis in Partial Differential Equations and Mathematical Physics, Encycl. Math. Sci. 
54 (1993), 1--108. (Translation from Russian: Itogi Nauki Tekh., Ser. Sovrem. Mat., Fundam. Napravleniya 
54 (1989), 5--111.)


\bibitem{Ber-Tayl}
 C.A.~Berenstein, B.A.~Taylor, 
{\it A new look at interpolation theory 
for  entire functions of  one variable.}
 Adv. in Math. 33 (1980), 109--143.

\bibitem{Boas}
 R.P.~Boas, jr., {\it Entire functions.} Acad. Press. Publ. Inc., NY, 1954. 

\bibitem{Ehren}
 L.~Ehrenpreis,
{\it Solution of Some Problems of Division, IV.}
Amer. J. Math. 57 (1960), no.~1., 522--588.


\bibitem{Horm} 
L.~H\"ormander, {\it The Analysis of Linear Partial Differential Operators I. Distribution
Theory and Fourier Analysis.}
 Springer-Verlag, Berlin--Heidelberg--New York--Tokyo, 1983.


\bibitem{Lev-Distr} 
B.Ja.~Levin,
{\it Distribution of zero of entire functions.}
 AMS Publ., Providence, Rhode Island, 1972. 



\end{thebibliography}
 \end{document}